\definecolor{darkgreen}{rgb}{0,0.4,0}
\definecolor{BrickRed}{rgb}{0.65,0.08,0}
\theoremstyle{plain}
\newtheorem{theorem}{Theorem}[section]
\newtheorem{proposition}[theorem]{Proposition}
\theoremstyle{remark}
\newtheorem{example}[theorem]{Example}
\newcommand{\oeis}[1]{\href{http://oeis.org/#1}{#1}}
\newcommand{\dd}{\mathtt d}
\newcommand{\uu}{\mathtt u}
\tikzstyle{punkt}=[rectangle, rounded corners, draw=black, very thick, text centered]
\newcommand{\addorcid}[1]{\href{https://orcid.org/#1}{\protect\includegraphics[height=3mm]{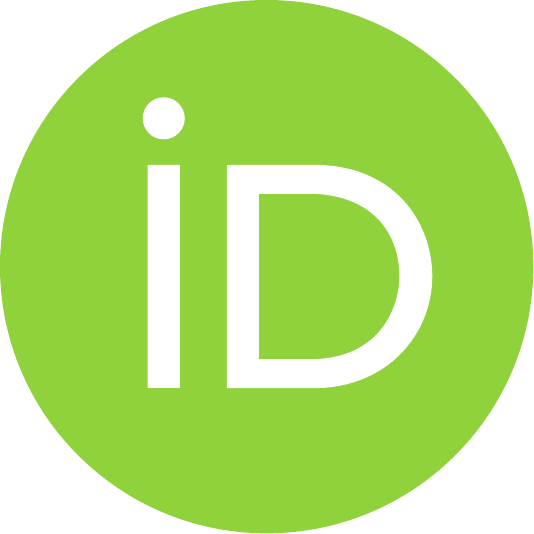}} }
\title{Bijections between Variants of \\ Dyck Paths and Integer Compositions}
\author{%
Manosij Ghosh Dastidar\addorcid{0000-0003-0721-4979}
\institute{TU Wien\\ Wien, Austria}
\institute{Institute of Discrete Mathematics and Geometry}
\email{manosij.dastidar@tuwien.ac.at}
\and 
Michael Wallner\addorcid{0000-0001-8581-449X}
\institute{TU Wien\\ Wien, Austria}
\institute{Institute of Discrete Mathematics and Geometry}
\email{michael.wallner@tuwien.ac.at}
\email{dmg.tuwien.ac.at/mwallner}
}
\begin{document}

\maketitle

\begin{abstract}
We give bijective results between several variants of lattice paths of length $2n$ (or $2n-2$) and integer compositions of $n$, all enumerated by the seemingly innocuous formula $4^{n-1}$. These associations lead us to make new connections between these objects, such as congruence results. 

\medskip

\noindent\textbf{Keywords: } Integer compositions, lattice paths, Dyck paths, bijections
\end{abstract}


\section{Introduction}

We explore several links between different variants of integer compositions and  generalizations of Dyck paths.
Let us first introduce these objects.
First, an \emph{integer composition} of a nonnegative integer $n$ is a tuple $(n_1,\dots,n_k)$ of nonnegative integers such that $n = n_1 + \dots + n_k$. 
Note that \emph{integer partitions} are integer compositions such that $n_1 \geq n_2 \geq \dots \geq n_k$, (equivalently, the order of summands is not significant). 
Second, a \emph{Dyck path} is a sequence of steps up $\uu =(1,1)$ and down $\dd = (1,-1)$ that starts at the origin, ends on the $x$-axis, and never crosses the $x$-axis. 
All classes of paths we consider will start at the origin and consist of steps $\uu$ and $\dd$, but the constraints will differ.
Natural classes are \emph{Dyck walks} that have no constraints, i.e., they may end anywhere and go below the $x$-axis, 
and \emph{Dyck bridges} (also known as \emph{grand Dyck paths}) that have to end on the $x$-axis but may go below it.

Our results reveal a series of bijections, shown in Figure~\ref{fig:bijections}, connecting these structures through a common enumeration formula $$4^{n-1}.$$ 
This gives the corresponding integer sequence \oeis{A000302} in the OEIS\footnote{The On-Line Encyclopedia of Integer Sequences: \url{http://oeis.org/}} many new combinatorial interpretations.
Additionally, these bijections often map natural statistics onto each other, such as the height of peaks and the number of crossings of the $x$-axis. 

\medskip

\begin{figure}[ht]
\begin{center}
\scalebox{0.52}{%
    \begin{tikzpicture}[
        node distance = 2.5cm and 3.0cm,
        auto,
        ->,
        >=latex',
        font=\LARGE,
        line width=0.8mm,
        block/.style = {circle, draw, minimum size=2.5cm, align=center},
    ]
    
        \node[block] (n1) {Pairs of \\compositions \\of $n$};
        \node[block, right=38mm of n1] (n2) {$3$-compositions \\of $n$\cite{andrews2007theory}};

        \node[block, below=of n1] (n5) {$2$-colored \\Dyck bridges of \\ length $2n-2$};
        \node[block, left=of n5] (n4) {Unconstrained \\Dyck walks \\ of length $2n-2$};
        \node[block, right=of n5] (n7) {Left-to-right max. \\ in Dyck bridges \\ of length $2n$};
        \node[block, right=of n7] (n6) {Dyck paths with \\height-labelled \\peaks of length $2n$};
    
        \draw[<->] (n1) -- node[midway, above] {\hyperref[prop:threecompbijection]{Prop. 2.2}} (n2);
        \draw[<->] (n1) -- node[midway, left] {\hyperref[prop:CompPairsDyckWalk]{Prop. 2.1}} (n4);
        \draw[<->] (n4) -- node[midway, above] {\hyperref[prop:2colunconstrained]{Prop. 2.3}} (n5);
        \draw[<->] (n5) -- node[midway, above] {\hyperref[prop:markedlefttoright2coloredbridge]{Prop. 3.3}} (n7);
        \draw[<->] (n7)  -- node[midway, above] {\hyperref[prop:LabeltoMaxima]{Prop. 3.4}} (n6);
            
+    \end{tikzpicture}}
\end{center}
\vspace{-4mm}
\caption{Bijections proved in this paper of classes of paths and integer compositions that are all enumerated by $4^{n-1}$.}
\label{fig:bijections}
\end{figure}
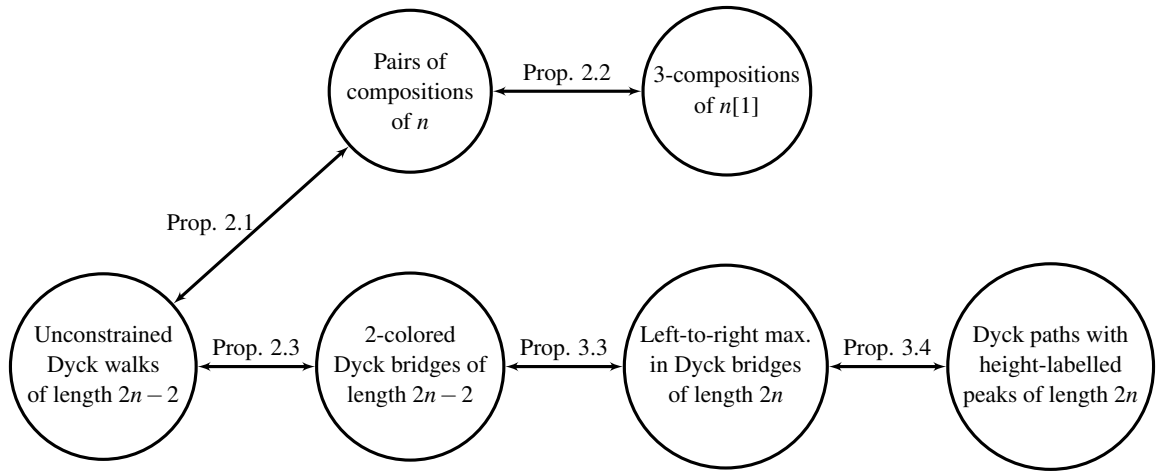

\section{Bijections involving integer compositions}

We start with the simple initial bijection, connecting pairs of compositions and Dyck walks. 
\begin{proposition}
    \label{prop:CompPairsDyckWalk}
    There exists a natural bijection between pairs of compositions of $n$ and Dyck walks of length $2n-2$.
\end{proposition}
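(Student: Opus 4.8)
The plan is to factor the claimed bijection through binary words of length $n-1$, using the classical encoding of compositions. First I would recall the standard correspondence between compositions of $n$ into positive parts and subsets of $\{1,\dots,n-1\}$: a composition $(n_1,\dots,n_k)$ is sent to its set of partial sums $\{n_1,\, n_1+n_2,\,\dots,\, n_1+\dots+n_{k-1}\}\subseteq\{1,\dots,n-1\}$, and conversely a subset records the gaps at which a row of $n$ cells is cut into consecutive nonempty blocks. Identifying a subset with its characteristic vector yields a bijection $\Phi$ between compositions of $n$ and binary words in $\{0,1\}^{n-1}$. This is where I would pin down the convention that parts are positive, since that is what makes the count $2^{n-1}$, and hence $4^{n-1}$ for ordered pairs $(\alpha,\beta)$ with both $\alpha$ and $\beta$ a composition of $n$.

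Next I would assemble the two halves. Given such a pair $(\alpha,\beta)$, apply $\Phi$ to each to obtain words $u=\Phi(\alpha)$ and $v=\Phi(\beta)$ in $\{0,1\}^{n-1}$, and read the concatenation $uv\in\{0,1\}^{2n-2}$ as a sequence of steps via $0\mapsto\uu$ and $1\mapsto\dd$. Since a Dyck walk of length $2n-2$ is by definition an arbitrary sequence of $2n-2$ steps in $\{\uu,\dd\}$, with no positivity or endpoint constraint, this step map is a bijection between $\{0,1\}^{2n-2}$ and Dyck walks of length $2n-2$. The inverse is equally explicit: split a walk into its first and last $n-1$ steps, record each half as a binary word, and apply $\Phi^{-1}$ to recover $(\alpha,\beta)$.

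The correctness argument is then just the composition of three bijections, so I would only need to check that each arrow is invertible and that the lengths match, namely $2(n-1)=2n-2$. The one genuinely bijective claim is $\Phi$ itself, which I would verify by exhibiting the explicit inverse (cut a row of $n$ cells at the recorded gaps, reading off the block sizes). Because Dyck walks are unconstrained, injectivity and surjectivity of the overall map are immediate once $\Phi$ is established: there is no feasibility condition to enforce on the word $uv$, which is precisely why this is the \emph{simple initial} bijection. I do not expect a real obstacle here; the only points demanding care are the convention on the parts of a composition and the choice of how to combine the two words. I would use concatenation, though interleaving would serve equally well, choosing whichever splitting lets the later propositions read the pair $(\alpha,\beta)$ off the two halves of the walk and thereby track the natural statistics.
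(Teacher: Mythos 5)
Your proof is correct and is essentially the same as the paper's: your map $\Phi$ (characteristic vector of the partial-sum set in $\{1,\dots,n-1\}$) is literally the paper's encoding ``write $k-1$ zeros then a $1$ for each part, drop the trailing $1$,'' and both proofs then concatenate the two words with $A$'s first and read $0\mapsto\uu$, $1\mapsto\dd$. Your explicit insistence on positive parts is a worthwhile clarification (the paper's introduction loosely says ``nonnegative''), but it does not change the argument.
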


\begin{proof}
Let a pair $(A,B)$ of two compositions of $n$ be given.
%
First, we convert each composition to a binary sequence: 
for each element $k$ in a composition, append $k-1$ zeros followed by a~$1$.  
By construction, both of these sequences have to end in $1$. 
So we remove these ones and then concatenate the binary sequences, with $A$'s sequence coming first. 
Finally, after replacing each $0$ by an up step $\uu$ and each $1$ by a down step~$\dd$ the claim follows. 
For the reverse direction cut the walk in the middle into two parts, and re-add the ones.
\end{proof}

A $k$-composition is an integer compositions whose parts come in $k$ different colors with the restriction that the last part of the composition is of the first color; see~\cite{andrews2007theory}. 
We will consider only $3$-compositions.

\begin{proposition}
    \label{prop:threecompbijection}
    There exists a natural bijection between $3$-compositions of $n$ and pairs of compositions of $n$.     
\end{proposition}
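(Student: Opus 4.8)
The plan is to exploit the identity $4^{n-1}=(1+3)^{n-1}$ that underlies the count of $3$-compositions and to realize the bijection coordinate-by-coordinate on the $n-1$ internal gaps of $n$. First I would recall from the proof of Proposition~\ref{prop:CompPairsDyckWalk} that a composition of $n$ is equivalent to a choice of which of the $n-1$ gaps between the $n$ units $1\,|\,1\,|\,\cdots\,|\,1$ are ``cut'': the $i$-th gap carries the bit $1$ precisely when it ends a part. Thus a \emph{pair} of compositions $(A,B)$ of $n$ is exactly the data of a pair of bits $(a_i,b_i)\in\{0,1\}^2$ for each of the $n-1$ gaps, i.e.\ one of four states per gap, which explains the count $4^{n-1}$.

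Next I would re-read a $3$-composition through the same lens. A composition of $n$ into $k$ parts corresponds to cutting $k-1$ of the gaps, and under this correspondence the $i$-th cut marks the end of the $i$-th part. Since in a $3$-composition every part but the last is freely colored from $\{1,2,3\}$ while the last part is forced to color $1$, the colors attach exactly to the $k-1$ non-final parts, that is, exactly to the $k-1$ cut gaps. Hence a $3$-composition of $n$ is precisely the data of, for each of the $n-1$ gaps, either ``uncut'' or ``cut and colored $c\in\{1,2,3\}$'' --- again four local states per gap.

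The bijection is then the obvious matching of these four states, applied independently at every gap:
$$
\text{uncut}\ \leftrightarrow\ (0,0),\quad
\text{cut, color }1\ \leftrightarrow\ (1,0),\quad
\text{cut, color }2\ \leftrightarrow\ (0,1),\quad
\text{cut, color }3\ \leftrightarrow\ (1,1).
$$
Reading off the first coordinates recovers a composition $A$ (cut $A$ at the gaps colored $1$ or $3$), and the second coordinates recover $B$ (cut $B$ at the gaps colored $2$ or $3$); conversely, a gap lying in $A$ only, in $B$ only, in both, or in neither dictates color $1$, $2$, $3$, or ``uncut'', respectively. Since the map acts coordinatewise through a fixed bijection of the four local states, it is manifestly a bijection, which I would confirm on the case $n=2$ (where the four $3$-compositions of $2$ match the four pairs of compositions of $2$) as a sanity check.

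The only genuinely delicate point is the bookkeeping of the last-part constraint: I must verify that the $3$-coloring carries exactly as many degrees of freedom as the cuts, so that it decomposes cleanly into two independent binary choices. This is precisely what the requirement ``the last part is of the first color'' guarantees --- it removes the one extra color choice that would otherwise spoil the per-gap count --- so I do not expect any real obstruction beyond stating this alignment carefully. Finally, I would remark that composing with Proposition~\ref{prop:CompPairsDyckWalk} immediately yields a direct bijection between $3$-compositions of $n$ and Dyck walks of length $2n-2$.
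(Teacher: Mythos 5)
Your proof is correct, but it takes a genuinely different route from the paper's. The paper works at the level of parts: it relabels colors $2,3$ as $L,R$, builds $A$ by merging each $L$-part into its successor and $B$ by merging each $R$-part into its successor, and then proves bijectivity by an explicit recursive inverse that compares first parts (if $a_1=b_1$ emit a color-$1$ part, if $a_1>b_1$ emit $b_1$ with color $2$, if $a_1<b_1$ emit $a_1$ with color $3$), arguing termination since both compositions shrink in lockstep. You instead recode both sides coordinatewise over the $n-1$ internal gaps --- a pair of compositions is a pair of bits per gap, and a $3$-composition is one of the four states $\{\text{uncut}\}\cup\{\text{cut with color }c\}$ per gap, the last-part constraint being exactly what makes the color data live on the cut gaps --- so that any matching of the two $4$-element state sets is automatically a bijection, with no inverse algorithm needed. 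Note that the resulting maps are not literally the same: unwinding the paper's merging rule gap-by-gap, a color-$1$ part ends a part in \emph{both} $A$ and $B$, color $2$ gives a cut in $B$ only, and color $3$ a cut in $A$ only, whereas you send color $1$ to a cut in $A$ only and color $3$ to a cut in both (the two bijections agree up to relabeling the colors; in both, the underlying composition of the $3$-composition is the common refinement of $A$ and $B$). Your version buys immediacy: bijectivity is trivial by construction, the count $4^{n-1}$ is transparent, and it composes with the binary encoding of Proposition~\ref{prop:CompPairsDyckWalk} to give a direct map to Dyck walks. The paper's version buys a part-level description that needs no reference to the unit/gap encoding, and its greedy inverse exposes structure (comparisons of leading parts, runs of identical parts) that can be useful for tracking statistics.
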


\begin{proof}
By definition, the parts of $3$-compositions have three labels $1$, $2$, and $3$.
Anticipating the result, we introduce a notion of left and right:
remove the labels of color~$1$, 
use label $L$ for color~$2$,
and label $R$ for color~$3$.

Now, we describe a map from $3$-compositions of $n$ to pairs of compositions of $n$.
First, we create two identical copies. 
In the first copy, we remove the labels $R$ and add the parts labeled by $L$ to the next part.
If the next part has also a label $L$, then the addition continues to the next part, etc. 
This gives a composition~$A$ without any labels.
Similarly, in the second copy, we remove the labels $L$ and add the parts labeled by~$R$ to the next part.
Again, if the next part has also a label $R$, then the addition continues, 
and we get a composition $B$ without any labels. 
Observe that the size of both compositions has not changed.
Therefore, $(A, B)$ is a pair of compositions of $n$. 

To prove that this map is in fact a  bijection, let us consider an arbitrary pair $(A,B)$ of compositions of $n$. 
The key statistic to consider is the \emph{run of identical parts}:
Let $A=(a_1,a_2,\dots,a_{\ell_A})$ and $B=(b_1,b_2,\dots,b_{\ell_B})$.
A run is a sequence of maximal length such that $a_1=b_1$, $a_2=b_2$, and so on. 
If $a_1 \neq b_2$ we say the run has length $0$. 

For the inverse map, we will describe a recursive algorithm, which reduces the sizes of $A$ and $B$ in the pair $(A,B)$ step-by-step and builds a $3$-composition $C$.
We start with an empty $3$-composition~$C$. 
Depending on the first parts of $A$ and $B$ we distinguish three cases:
\begin{enumerate}    
    \item If $a_1=b_1$ then we attach $a_1$ with label $1$ to $C$, remove $a_1$ from $A$, and remove $b_1$ from $B$.
    \item If $a_1>b_1$ then we attach $b_1$ with color $2$ to $C$, replace $a_1$ in $A$ by $a_1-b_1$, and remove $b_1$ from $B$.
    \item If $a_1<b_1$ then we attach $a_1$ with color $3$ to $C$, remove $a_1$ from $A$, and replace $b_1$ in $B$ by $b_1-a_1$.
\end{enumerate}
We repeat this process with the new values of $A$, $B$, and $C$. 
As the sizes of $A$ and $B$ decrease in each step by at least one, this process terminates. 
Moreover, note that in each step both parts decrease by the same size. 
Hence, in the last step the process ends with case one where both parts are equal, and therefore the final part gets label $1$, as required in the definition of $3$-compositions.
\end{proof}

\medskip

\noindent\begin{minipage}{0.55\textwidth}
\begin{example}
Consider the $3$-composition $6_1+1_2+4_3+2_1$ of $n=13$.
First, we remove the labels of color~$1$, 
use label~$L$ for color~$2$,
and label~$R$ for color~$3$.
Second, we create two identical copies. 
In the first copy, we remove the labels $R$ and add the parts labeled by $L$ to the next part.
If the next part has also a label $L$, then the addition to the next part continues, etc. 
Similarly, in the second copy, we remove the labels $L$ and add the parts labeled by $R$ to the next part.
This gives a pair of compositions of $n$ without any labels, and we have shown in Proposition~\ref{prop:threecompbijection} that this is in fact a bijection. 
\end{example}
\end{minipage}
 \hfill
\begin{minipage}{0.45\textwidth}
\vspace{-0.2\baselineskip}
\begin{center}
\begin{tikzpicture}[node distance=0.75cm, auto, scale=0.8, every node/.style={scale=0.85}]
\node[punkt] (first) at (0,0.1) {\(6_1+1_2+4_3+2_1\)};
\node[punkt] (second) [below=of first] {\((6+1_L+4_R+2)\), \((6+1_L+4_R+2)\)};
\node[punkt] (third) [below=of second] {\((6+1_L+4+2)\), \((6+1+4_R+2)\)};
\node[punkt] (fourth) [below=of third] {\((6,1_L+4,2)\), \((6,1,4_R+2)\)};
\node[punkt] (fifth) [below=of fourth] {\((6,5,2)\), \((6,1,6)\)};

\path[thick,->] 
(first) edge (second)
(second) edge (third)
(third) edge (fourth)
(fourth) edge (fifth);
\end{tikzpicture}
\end{center}
\end{minipage}


\section{Bijections involving Dyck paths}

Let us now consider more complicated classes of Dyck paths. 
All of them use the concept of a \emph{peak}, which is a consecutive pattern $\uu \dd$. 
The first class we consider are \emph{Dyck paths with a marked peak}, which are classical Dyck paths enriched by a marker on a distinguished peak. Two such paths are different, if the underlying paths differ, or, if the paths are the same then two different peaks are marked.
Therefore, the number of these paths is equal to the number of peaks in all Dyck paths, whose enumeration is well-known; see, e.g., \cite[Section 6.1]{Deutsch1999Dyck}.

\begin{theorem}
    \label{theo:Dyckpeak}
    There is an explicit bijection between Dyck paths with a marked peak of height~$h$ and Dyck bridges starting with a $\dd$ step and $h-1$ crossings of the $x$-axis preserving the length.
    Therefore, the number of peaks in all Dyck paths of length $2n$ is equal to $\binom{2n-1}{n}$; see \oeis{A001700}.
\end{theorem}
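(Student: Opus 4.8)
The plan is to establish both assertions at once by building one explicit, length-preserving bijection, routed through a common intermediate object: an \emph{ordered $h$-tuple $(F_1,\dots,F_h)$ of nonempty Dyck paths whose semilengths sum to $n$}. I would biject the marked-peak paths (marked peak of height $h$) with such tuples, and separately biject the target bridges (starting with $\dd$, exactly $h-1$ crossings) with the same tuples; the composite is the desired map. Crucially, in both encodings the number of parts of the tuple is $h$, so the statistic \emph{peak height $h$} is transported to \emph{$h-1$ crossings}, and the total length stays $2n$ because each side has tuple-semilength $n$.

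The bridge side is the easy half. I would cut a Dyck bridge at each of its crossings (the axis points where it passes from strictly below to strictly above, or vice versa), splitting it into maximal one-sided sojourns. A bridge that begins with $\dd$ and has exactly $h-1$ crossings is then a concatenation $E_1E_2\cdots E_h$ of $h$ nonempty sojourns whose sides strictly alternate, starting below the axis; reflecting each below-sojourn through the axis turns $E_i$ into a nonempty Dyck path $F_i$ of the same length. Conversely, from a tuple I reflect $F_1,F_3,F_5,\dots$ below the axis and concatenate, which forces the initial step to be $\dd$ and manufactures precisely $h-1$ sign changes, i.e.\ crossings.

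The marked-peak side carries the real content. Given a Dyck path $P$ with a marked peak of height $h$, I would slice it by \emph{last} passages going up and \emph{first} passages coming down: for $j=1,\dots,h$ let $U_j$ be the last up-step from level $j-1$ to level $j$ before the marked peak and $D_j$ the first down-step from level $j$ to level $j-1$ after it, so that $U_h$ and $D_h$ are exactly the two steps of the marked peak. Letting $R_{j-1}$ be the piece between $U_{j-1}$ and $U_j$ (with $R_0$ the initial piece before $U_1$) and $S_{j-1}$ the piece between $D_j$ and $D_{j-1}$ (with $S_0$ the final piece after $D_1$), one obtains the canonical form
\[
 P = R_0\,U_1\,R_1\,U_2\cdots R_{h-1}\,U_h\,D_h\,S_{h-1}\cdots D_2\,S_1\,D_1\,S_0 .
\]
Each $R_{j-1},S_{j-1}$ is a genuine excursion at level $j-1$, since past the last up-crossing into a level the walk stays weakly above it until the peak, and dually on the way down. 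Lowering these excursions to the axis and setting $F_j:=\uu\,\overline{R_{j-1}}\,\dd\,\overline{S_{j-1}}$ — an arch carrying $R_{j-1}$, followed by $S_{j-1}$, the two separated by the first return to the axis — yields $h$ nonempty Dyck paths of total semilength $n$.

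The inverse applies the first-return decomposition to each $F_j$, recovering $(R_{j-1},S_{j-1})$ uniquely, and reassembles $P$ from the canonical form with the marked peak reinstated as the central $U_hD_h$. The step I expect to be the main obstacle is precisely the proof that this slicing-and-gluing is a bijection: one must verify that the steps $U_j,D_j$ are well defined and appear in the stated order, that the intervening pieces really are excursions (so lowering them is legitimate), and above all that the marked peak is faithfully recovered even when $P$ returns to height $h$ or rises above it elsewhere — this delicacy is what makes a naive cyclic-rotation argument fail. Granting the bijection, the theorem follows immediately: summing over $h\ge 1$ identifies the total number of peaks among all Dyck paths of length $2n$ with the number of Dyck bridges of length $2n$ beginning with $\dd$, and the latter is obtained by fixing the first step as $\dd$ and choosing the positions of the $n$ up-steps among the remaining $2n-1$ steps, namely $\binom{2n-1}{n}$.
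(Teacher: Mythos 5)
Your proposal is correct and is essentially the paper's own argument: your last-passage/first-passage slicing $P = R_0\,U_1\,R_1\cdots R_{h-1}\,U_h\,D_h\,S_{h-1}\cdots D_1\,S_0$ with arches $F_j=\uu\,\overline{R_{j-1}}\,\dd\,\overline{S_{j-1}}$ is exactly the paper's decomposition $\LL = \LL_1\uu\LL_2\uu\cdots\LL_h\uu$, $\RR = \dd\RR_h\dd\RR_{h-1}\cdots\dd\RR_1$ with $\DD_i=\uu\LL_i\dd\RR_i$, and your alternating reflection of the odd-indexed pieces into a bridge starting with $\dd$ and having $h-1$ crossings is the paper's concatenation $\varphi(\DD_1)\DD_2\varphi(\DD_3)\cdots$ with $\varphi$ chosen to be the reflection. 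The only cosmetic differences are that you factor the map through an explicit intermediate tuple $(F_1,\dots,F_h)$ and count the bridges starting with $\dd$ directly as $\binom{2n-1}{n}$ rather than as half of $\binom{2n}{n}$.
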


    \newcommand{\DD}{D}
    \newcommand{\LL}{L}
    \newcommand{\RR}{R}
    
\begin{proof}
    Let $\DD$ be a Dyck path with marked peak at height $h$. 
    Using this peak, we decompose the path $\DD$ into a left part $\LL$ from the origin to this peak and a right part $R$ from this peak to the end: $\DD = \LL \RR$ such that $\LL$ ends with $\uu$ and $\RR$ starts with $\dd$.
    In $\LL$ we perform a last-passage decomposition, cutting at the $\uu$ leaving a certain altitude for the last time;
    while in $\RR$ we perform a first-passage decomposition, cutting at the $\dd$ bringing us down to a new altitude for the first time; see Figure~\ref{fig:firstlastpassageLR}.
    More formally, we have
    \begin{align}
        \label{eq:DyckPeakDecompLR}
        \LL &= \LL_1 \uu \LL_2 \uu \dots \uu \LL_{h} \uu, &
        \RR &= \dd \RR_{h} \dd \RR_{h-1} \dd \dots \dd \RR_{1},
    \end{align}
    where $\LL_i$ and $\RR_i$ for $i=1,\dots,h$ are Dyck paths.
    Now, we pair the paths $\LL_i\uu$ and $\dd \RR_i$ with the same index and map them as follows to non-empty Dyck paths:
    \begin{align*}
        \DD_i = \uu \LL_i \dd \RR_i.
    \end{align*}    
    Then we concatenate these parts, after mapping each part with odd index to its image obtained by any fixed bijection $\varphi$ between Dyck paths and negative Dyck paths. 
    This gives the Dyck bridge
    \begin{align}
        \label{eq:DyckPeakFinalDecomp}
        \varphi(\DD_1) \DD_2 \varphi(\DD_3) \DD_4 \dots \varphi(\DD_{h-1}) \DD_{h}
    \end{align}
    when $h$ is even.
    For odd $h$ it ends with $\varphi(\DD_{h})$.
    This bridge starts with a down  step $\dd$  and crosses the $x$-axis $h-1$ times, as claimed.
    Second, let a Dyck bridge starting with a $\dd$ step be given. 
    We cut at each crossing of the $x$-axis and recover the components $\DD_i$ and $\varphi(\DD_i)$. 
    Hence, it is straightforward to recover the components $\LL_i$ and $\RR_i$ and to rebuild the Dyck path $\DD$ with marked peak. 

Finally, bridges of length $2n$ are counted by $\binom{2n}{n}$, as there is an equal number of up and down steps. Since half of them start with a down step, we get 
\[\frac{1}{2}\binom{2n}{n} = \binom{2n-1}{n}. \qedhere \] 
\end{proof}

   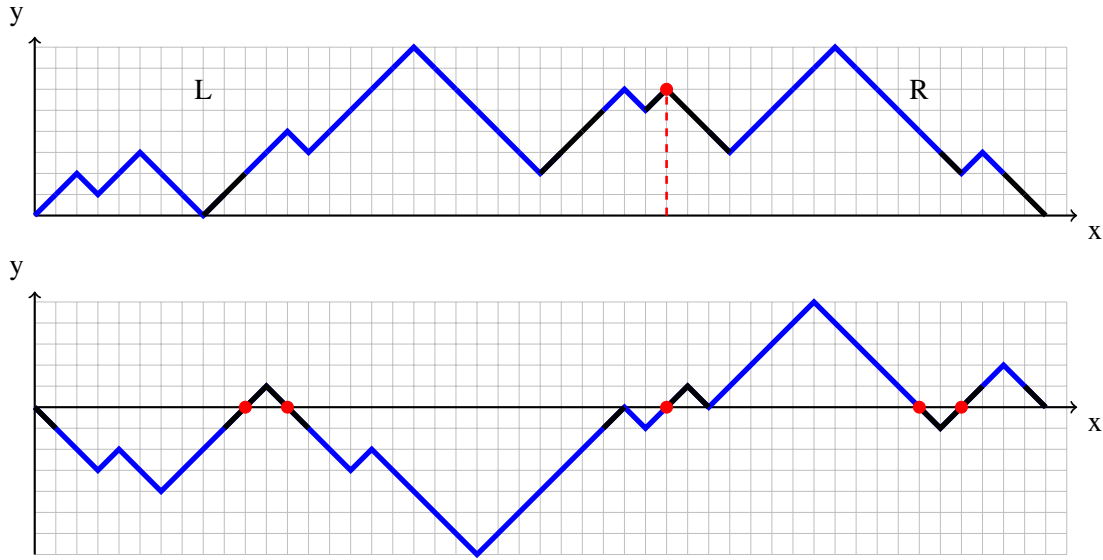
\begin{figure}[t]
   \centering
    \begin{tikzpicture}[scale=0.28]
        
        \newcommand{\pathLength}{49}
        \draw[step=1cm,lightgray,very thin] (0,0) grid (\pathLength,8);
        \draw[thick,->] (0,0) -- (\pathLength+0.5,0) node[anchor=north west] {x};
        \draw[thick,->] (0,0) -- (0,8+0.5) node[anchor=south east] {y};

        \draw[line width=2pt,blue] (0,0) -- ++(1,1); 
        \draw[line width=2pt,blue] (1,1) -- ++(1,1) -- ++(1,-1) -- ++(1,1); 
        \draw[line width=2pt,blue] (4,2) -- ++(1,1) -- ++(1,-1); 
        \draw[line width=2pt,blue] (6,2) -- ++(1,-1); 
        \draw[line width=2pt,blue] (7,1) -- ++(1,-1) -- ++(1,1); 
        \draw[line width=2pt,black] (8,0) -- ++(1,1) -- ++(1,1); 
        \draw[line width=2pt,blue] (10,2) -- ++(1,1); 
        \draw[line width=2pt,blue] (11,3) -- ++(1,1) -- ++(1,-1) -- ++(1,1); 
        \draw[line width=2pt,blue] (14,4) -- ++(1,1); 
        \draw[line width=2pt,blue] (15,5) -- ++(1,1); 
        \draw[line width=2pt,blue] (16,6) -- ++(1,1); 
        \draw[line width=2pt,blue] (17,7) -- ++(1,1) -- ++(1,-1); 
        \draw[line width=2pt,blue] (19,7) -- ++(1,-1); 
        \draw[line width=2pt,blue] (20,6) -- ++(1,-1); 
        \draw[line width=2pt,blue] (21,5) -- ++(1,-1); 
        \draw[line width=2pt,blue] (22,4) -- ++(1,-1); 
        \draw[line width=2pt,blue] (23,3) -- ++(1,-1) -- ++(1,1);
        \draw[line width=2pt,black] (24,2) -- ++(1,1) -- ++(1,1); 
        \draw[line width=2pt,black] (26,4) -- ++(1,1); 
        \draw[line width=2pt,blue] (27,5) -- ++(1,1) -- ++(1,-1) -- ++(1,1); 
        \draw[line width=2pt,black] (29,5) -- ++(1,1) -- ++(1,-1) -- ++(1,-1); 
        \draw[line width=2pt,blue] (32,4) -- ++(1,-1) -- ++(1,1)-- ++(1,1); 
        \draw[line width=2pt,black] (32,4) -- ++(1,-1); 
        \draw[line width=2pt,blue] (35,5) -- ++(1,1); 
        \draw[line width=2pt,blue] (36,6) -- ++(1,1); 
        \draw[line width=2pt,blue] (37,7) -- ++(1,1) -- ++(1,-1); 
        \draw[line width=2pt,blue] (39,7) -- ++(1,-1);
         \draw[line width=2pt,blue] (40,6) -- ++(1,-1); 
        \draw[line width=2pt,blue] (41,5) -- ++(1,-1); 
        \draw[line width=2pt,blue] (42,4) -- ++(1,-1) -- ++(1,-1) -- ++(1,1) -- ++(1,-1); 
        \draw[line width=2pt,black] (43,3) -- ++(1,-1); 
         \draw[line width=2pt,black] (46,2) -- ++(1,-1); 
         \draw[line width=2pt,black] (47,1) -- ++(1,-1); 
        
        \filldraw[red] (30,6) circle (8pt);
        \draw[red, dashed, line width=1pt] (30,0) -- (30,6);

        \node at (8,6) {L};
        \node at (42,6) {R};
    \end{tikzpicture}

      \begin{tikzpicture}[scale=0.28]
    \newcommand{\pathLength}{49}
        \draw[step=1cm,lightgray,very thin] (0,-7) grid (\pathLength,5);
        \draw[thick,->] (0,0) -- (\pathLength+0.5,0) node[anchor=north west] {x};
        \draw[thick,->] (0,-7) -- (0,5+0.5) node[anchor=south east] {y};
    
    \draw[line width=2pt,blue] (0,0)
    -- ++(1,-1)
    -- ++(1,-1)
    -- ++(1,-1)
    -- ++(1,1)
    -- ++(1,-1)
    -- ++(1,-1)
    -- ++(1,1)
    -- ++(1,1)
    -- ++(1,1)
    -- ++(1,1)
    -- ++(1,1)
    -- ++(1,-1)
    -- ++(1,-1)
    -- ++(1,-1)
    -- ++(1,-1)
    -- ++(1,1)
    -- ++(1,-1)
    -- ++(1,-1)
    -- ++(1,-1)
    -- ++(1,-1)
    -- ++(1,-1)
    -- ++(1,1)
    -- ++(1,1)
    -- ++(1,1)
    -- ++(1,1)
    -- ++(1,1)
    -- ++(1,1)
    -- ++(1,1)
    -- ++(1,-1)
    -- ++(1,1)
    -- ++(1,1)
    -- ++(1,-1)
    -- ++(1,1)
    -- ++(1,1)
    -- ++(1,1)
    -- ++(1,1)
    -- ++(1,1)
    -- ++(1,-1)
    -- ++(1,-1)
    -- ++(1,-1)
    -- ++(1,-1)
    -- ++(1,-1)
    -- ++(1,-1)
    -- ++(1,1)
    -- ++(1,1)
    -- ++(1,1)
    -- ++(1,-1)
    -- ++(1,-1)
;

    \draw[line width=2pt,black] (0,0) -- (1,-1); 
    \draw[line width=2pt,black] (9,-1) -- ++(1,1) -- ++(1,1) -- ++(1,-1) -- ++(1,-1); 
    \draw[line width=2pt,black] (27,-1) -- ++(1,1); 
    \draw[line width=2pt,black] (30,0) -- ++(1,1) -- ++(1,-1); 
    \draw[line width=2pt,black] (42,0) -- ++(1,-1) -- ++(1,1) -- ++(1,1); 
    \draw[line width=2pt,black] (47,1) -- ++(1,-1);

    \filldraw[red] (10,0) circle (8pt);
    \filldraw[red] (12,0) circle (8pt);
    \filldraw[red] (30,0) circle (8pt);
    \filldraw[red] (42,0) circle (8pt);
    \filldraw[red] (44,0) circle (8pt);
    
\end{tikzpicture}
    \caption{A Dyck path with a marked peak (red dot) at height 6 and its image under the bijection from Theorem~\ref{theo:Dyckpeak} given by a Dyck bridge starting with a $\dd$~step and $5=6-1$ crossings (red dots). The black steps are used in the last-passage (resp., first-passage) decomposition in the proof. } 
    \label{fig:firstlastpassageLR}
\end{figure}

We return now to the bijections of Figure~\ref{fig:bijections} and
we connect our results with \emph{$2$-colored bridges}; see~\cite[Section 6.4]{BanderierKubaWallner2022}.
They are defined as the concatenation of two bridges such that the first bridge is colored in color $1$ and the second one in color $2$.
Note that contrary to \cite{BanderierKubaWallner2022}, we allow each part to be empty.
Hence, it is easy to see that its generating function is equal to the square of the  generating function $B(z) = \frac{1}{\sqrt{1-4z^2}}$ of bridges:
\[
    B(z)^2 = \frac{1}{1-4z^2}.
\]

\begin{proposition}
    \label{prop:2colunconstrained}
    There is an explicit bijection between $2$-colored Dyck bridges and unconstrained Dyck walks of the same length $2n$.
\end{proposition}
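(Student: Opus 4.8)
The plan is to build the bijection around a \emph{last-passage decomposition} of an unconstrained walk, in the same spirit as the passage decompositions used in the proof of Theorem~\ref{theo:Dyckpeak}. As a preliminary sanity check, both families have the same cardinality: an unconstrained Dyck walk of length $2n$ is an arbitrary word in $\uu,\dd$, so there are $4^n$ of them, matching $[z^{2n}]B(z)^2 = [z^{2n}]\frac{1}{1-4z^2}=4^n$. It is worth noting at the outset that, contrary to a $2$-colored bridge (which always has exactly $n$ steps $\uu$ and $n$ steps $\dd$), a walk may end at any even height; hence the bijection cannot preserve the multiset of steps and must genuinely create displacement. This rules out the naive ``reflect or reverse one color'' maps and shows where the real work lies.

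First I would factor a walk $W$ of length $2n$ at its last return to the $x$-axis, writing $W = B\,M$. The prefix $B$ is then a Dyck bridge (its endpoint is the last visit of $W$ to height $0$) and the suffix $M$ is a \emph{no-return walk}: it starts at $0$ and afterwards stays strictly positive or strictly negative until the end. This factorization is unique, so $W \leftrightarrow (B,M)$ is already a bijection. Since a walk sits at height $0$ only at even positions, $B$ has some even length $2k$ and $M$ has length $2(n-k)$. The identification I aim for is $B = B_1$, the first (color-$1$) bridge, together with $M = \beta(B_2)$, where $\beta$ is a length-preserving bijection turning the second (color-$2$) bridge $B_2$ into a no-return walk.

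The enumerative fact underlying $\beta$ is that no-return walks of length $2j$ and bridges of length $2j$ are equinumerous, both counted by $\binom{2j}{j}$ (the classical ``no return to the origin'' identity for $\pm1$ walks). To make this explicit, I would match strictly positive no-return walks with bridges starting with $\uu$ and strictly negative ones with bridges starting with $\dd$, and realize $\beta$ by a reflection argument in the spirit of the map $\varphi$ from Theorem~\ref{theo:Dyckpeak}: reflect exactly those descents of $B_2$ that would otherwise bring the path back to level $0$, so that the resulting walk never revisits the axis. For $n=1$ this sends $\uu\dd \mapsto \uu\uu$ and $\dd\uu \mapsto \dd\dd$, and one checks directly that the four $2$-colored bridges of length $2$ are sent bijectively to the four walks $\uu\uu,\uu\dd,\dd\uu,\dd\dd$.

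Assembling the pieces, the bijection is $(B_1,B_2) \mapsto B_1\,\beta(B_2)$. It preserves length and inverts cleanly: because $\beta(B_2)$ is a no-return walk, the last return to $0$ of $B_1\,\beta(B_2)$ occurs precisely at the junction, so one recovers $B_1$ and the no-return part by cutting there and then applies $\beta^{-1}$. I expect the main obstacle to be making $\beta$ genuinely explicit and bijective, in particular specifying the reflection unambiguously, verifying that it respects the sign/first-step dictionary, and handling the degenerate cases where $B_1$ or $B_2$ (hence $M$) is empty. Everything else---uniqueness of the last-passage factorization, the length bookkeeping, and invertibility---should be routine once $\beta$ is pinned down.
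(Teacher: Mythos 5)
Your overall architecture is sound and genuinely different from the paper's: you cut an unconstrained walk at its \emph{last return} to the $x$-axis, so the color-$2$ bridge must be encoded by a walk that never revisits the axis, whereas the paper cuts at the \emph{last crossing} of the $x$-axis, so the color-$2$ bridge only needs to be encoded by a (possibly axis-touching) meander attached on the opposite side of the axis; the paper then gets that encoding for free by invoking the known explicit bijection between bridges and meanders of the same length~\cite{Marchal2003}, handling empty parts as separate cases. Your counting checks ($4^n$ walks, $\binom{2j}{j}$ no-return walks, the sign/first-step dictionary) are all correct, so your plan would work \emph{provided} you can exhibit an explicit bijection $\beta$ from bridges of length $2j$ to no-return walks of length $2j$.

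That is exactly where the proposal breaks: the concrete recipe you give for $\beta$ --- ``reflect exactly those descents of $B_2$ that would otherwise bring the path back to level $0$'' --- is not injective. Take the two bridges $\uu\uu\dd\dd$ and $\uu\dd\dd\uu$ of length $4$. In $\uu\uu\dd\dd$ only the final $\dd$ would hit the axis, and reflecting it gives $\uu\uu\dd\uu$; in $\uu\dd\dd\uu$ only the second step $\dd$ would hit the axis, and after reflecting it the remaining steps $\dd\uu$ stay strictly positive, giving again $\uu\uu\dd\uu$. So two distinct bridges map to the same walk, and (since there are only three bridges of length $4$ starting with $\uu$) the walk $\uu\uu\uu\uu$ is never attained; your length-$2$ sanity check is simply too small to detect this. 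The failure is structural: the greedy reflection forgets \emph{which} descents were flipped, which is precisely the information an inverse must recover, and constructing a map that retains it is the entire content of the classical equinumerosity you quote --- it requires the same kind of machinery (last-passage decompositions with alternating flips as in Theorem~\ref{theo:Dyckpeak}, or the cited bijection of Marchal) that the paper leans on. If you replace $\beta$ by such a correct explicit bijection, or switch your cut from the last return to the last crossing (which turns your argument into the paper's), the rest of your proof --- uniqueness of the factorization, the empty cases, and invertibility --- goes through as you describe.
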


\begin{proof}
    Recall the classical notion of a Dyck meander, which is defined as the prefix of a Dyck path, i.e., a path that starts at $0$, never goes below the $x$-axis, but does not necessarily end on the $x$-axis.
    We will use repeatedly that there is an explicit bijection between Dyck bridges 
    and Dyck meanders of the same length~$2n$; see~\cite{Marchal2003}. 
    Sometimes, it will be necessary to transform a meander further into a negative meander, by flipping all steps, i.e., exchanging $\uu$ by $\dd$ and vice versa. 
    
    We distinguish four cases.    
    First, the first and second bridges are non-empty. 
    The idea is that the change in color corresponds to the last crossing of the $x$-axis.    
    For this purpose we transform the second bridge into a meander or negative meander and 
    attach it to the first bridge such that the attached meander continues on the other side of the $x$-axis. 
    We can easily reverse this procedure by cutting at the last crossing of the $x$-axis.
    All the other cases will have no crossings. 
    Second, if the first bridge is non-empty and the second one is empty, we transform the first bridge into a meander.
    Third, if the first bridge is empty and the second on is non-empty, we transform the second bridge into a negative meander.
    Finally, if both bridges are empty, we map them to the empty walk.    
\end{proof}

We continue, with \emph{Dyck bridges with marked strict left-to-right maximum}. 
A \emph{strict left-to-right maximum} is any peak $\uu\dd$ that has a greater height than all peaks to its left.
We called it marked in the previous sense, when it is attached with a distinguished marker.


\begin{proposition}
    \label{prop:markedlefttoright2coloredbridge}
    There is an explicit bijection between Dyck bridges of length $2n$ with marked strict left-to-right maximum at height $h$ and $2$-colored Dyck bridges of length $2n-2$ with $h-1$ crossings of the $x$-axis in color $1$.
\end{proposition}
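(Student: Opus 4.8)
The plan is to adapt the last-passage/first-passage construction from the proof of Theorem~\ref{theo:Dyckpeak}, using the second color to absorb the excursions that a bridge, unlike a Dyck path, may make below the $x$-axis. First I would cut the bridge at its marked strict left-to-right maximum, writing it as $W = P\,\uu\dd\,Q$, where $P$ runs from the origin to height $h-1$ and $Q$ runs from height $h-1$ back to $0$; deleting the two peak steps already accounts for the drop in length from $2n$ to $2n-2$. The strict-maximum condition is exactly what forces every peak of $P$ to lie below $h$, so that $P$ stays weakly below the line $y=h-1$ and reaches it only through its final up-step. This is the feature that makes the last-passage decomposition of $P$ have the same shape as in Theorem~\ref{theo:Dyckpeak}, whereas $Q$ remains unconstrained. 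I would then apply a last-passage decomposition to $P$ and a first-passage decomposition to $Q$, peeling off one piece per height level together with the connecting up- and down-steps.

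The key structural observation is that, at every positive level, the peeled pieces are genuine excursions (Dyck paths), whereas the two pieces sitting at level $0$, namely the initial piece of $P$ and the terminal piece of $Q$, are free bridges that may dip below the axis. I would route precisely these below-axis pieces into the second color to obtain the color-$2$ bridge, and recombine the positive excursions with their connecting steps into blocks exactly as in Theorem~\ref{theo:Dyckpeak}. Concatenating these blocks with the alternating reflection $\varphi$ then produces the color-$1$ bridge, and the $h$ levels traversed should yield the $h-1$ crossings. Inverting the map is guided by Theorem~\ref{theo:Dyckpeak}: the value $h$ is read off from the crossings in color~$1$, the reflections and passages are undone, the peak is reinserted, and the content of color~$2$ is fed back in as the below-axis excursions.

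The main obstacle is the bookkeeping that makes the crossing count come out to exactly $h-1$. Removing the top peak breaks the left/right symmetry of the decomposition, so the naive pairing is off by one and produces two distinct level-$0$ free pieces rather than a single second-color bridge; reconciling this, equivalently pinning down the precise meaning of ``crossings of the $x$-axis in color~$1$'' so that it tracks $h$ rather than the raw number of sign changes, is the delicate point and the place where invertibility must be checked most carefully. I would steer and verify this step by the generating-function identity it has to satisfy: writing $C=C(z^2)$ for the Catalan generating function and $B=1/\sqrt{1-4z^2}$ for that of bridges, the objects carrying parameter $h$ are enumerated on the two sides by $z^{2}\,C\,(C-1)^{h-1}\,B$ and by $C\,(C-1)^{h-1}\cdot B$, respectively. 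Since $z^{2}C=(C-1)/C$, the two agree up to the factor $z^{2}$ coming from the deleted peak, and summing over $h\ge 1$ collapses to $z^{2}B^{2}$, the generating function of $4^{n-1}$. Checking this equality level by level both fixes the correct crossing statistic and confirms that the construction is the claimed length-preserving bijection.
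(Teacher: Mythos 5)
Your construction breaks precisely at the step you call the ``key structural observation'': it is not true that the pieces peeled off $P$ by a last-passage decomposition are genuine (unconstrained) Dyck paths together with a free bridge. Since the marked peak is a \emph{strict} left-to-right maximum, all of $P$ must stay strictly below height $h$ (as you note yourself), and this upper bound is inherited by every peeled piece: the excursion sitting at level $i$ can have height at most $h-1-i$, and the initial level-$0$ piece is a bridge confined below $h$. So your map ranges over height-restricted families, not over the unrestricted excursions and bridges that the target objects require; the generating-function check you propose does not detect this, because it enumerates the intended answer (free factors $C$ and $B$) rather than what your decomposition actually produces. The paper's proof avoids exactly this problem by decomposing the left part by \emph{first} passages instead of last passages: because the peak top is the first visit to height $h$, the piece between consecutive first passages hangs weakly \emph{below} its level, so it is an arbitrary negative excursion $\varphi(L_i)$ --- the constraint ``stay below $h$'' becomes automatic, and the below-axis wanderings of the bridge before the peak are absorbed into the $\varphi(L_i)$ for free rather than being shunted into a level-$0$ remainder.

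The second gap is the one you flag but leave open, and it is not bookkeeping that a counting identity can settle: your decomposition produces \emph{two} free level-$0$ bridges (the initial piece of $P$ and the terminal piece of $Q$), while the image has room for only one color-$2$ bridge; concatenating them is not invertible (and not even count-consistent, since $B^2\neq B$). The paper's order of operations removes the problem before it arises: it first cuts the bridge at the \emph{first return to the $x$-axis after the maximum} and declares everything to the right to be the color-$2$ bridge; the remaining right part then stays positive, so its first-passage decomposition reads $\dd R_h \dd R_{h-1} \dd \cdots R_2 \dd$ with an \emph{empty} level-$0$ piece. Combined with the first-passage treatment of the left part, exactly one free bridge ever appears, it is copied verbatim into color $2$, and the inverse is immediate: split off color $2$, cut color $1$ at its sign changes to recover $\varphi(L_1)$ and the blocks $D_i=\uu L_i \dd R_i$, reinsert the peak, and reassemble. (Two smaller slips: $P$ may touch level $h-1$ several times, not only via its final up-step; and with your pieces any pairing as in Theorem~3.1 yields $h-1$ blocks and hence only $h-2$ sign changes, which is the off-by-one you noticed but did not repair.)
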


\begin{proof}
    Let us start with a Dyck bridge with marked strict left-to-right maximum of length $2n$.
    Then, we cut the bridge at the first return to the $x$-axis after this maximum. 
    The second part to the right is a bridge, which we give color $2$.
    Onto the first part we apply a similar idea as in the bijection of Theorem~\ref{theo:Dyckpeak}.
    As before, we cut the path at the marked left-to-right maximum into a left and right part given by $\LL \RR$, such that $\LL$ ends with $\uu$ and $\RR$ starts with~$\dd$.
    Now, we decompose it similar to~\eqref{eq:DyckPeakDecompLR} into
    \begin{align*}
        \LL &= \varphi(\LL_1) \uu \varphi(\LL_2) \uu \dots \uu \varphi(\LL_{h}) \uu,\\
        \RR &= \dd \RR_{h} \dd \RR_{h-1} \dd \dots \RR_{2} \dd,
    \end{align*}
    where $h$ is the height of the peak, and $\LL_i$ and $\RR_i$ are Dyck paths.
    Note that in this case $\RR$ ends with a $\dd$ step and contains only $h-1$ Dyck paths $\RR_i$.
    As in the proof of Theorem~\ref{theo:Dyckpeak} we form Dyck paths $D_i = \uu \LL_i \dd \RR_i$ for $i=2,\dots,h$.
    Finally, we remove the two steps of the marked peaks, and get the following bridges with two steps less (compare with Equation~\eqref{eq:DyckPeakFinalDecomp}):    
    \begin{align*}
        \varphi(\LL_1) \DD_2 \varphi(\DD_3) \DD_4 \dots \varphi(\DD_{h-1}) \DD_h,
    \end{align*}
    when $h$ is even.
    For odd $h$ it ends with $\varphi(\DD_h)$.
    
    The mapping may be reversed by repeating the aforementioned steps in reverse order.
\end{proof}

We end the bijections shown in Figure~\ref{fig:bijections} by the following link to \emph{Dyck paths with height-labeled peak} that are Dyck paths in which one peak is associated with a label from $\{1,2,\dots,h\}$, where $h$ is the height of the specific peak. 

\pagebreak

\begin{proposition}
\label{prop:LabeltoMaxima}
    There is an explicit bijection between Dyck paths with height-labeled peak with label~$\mu$ at height~$h$ and Dyck bridges of the same length with marked strict left-to-right maximum at height~$\mu$ and $h-\mu$ crossings of the $x$-axis after this maximum.
\end{proposition}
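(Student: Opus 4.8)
The plan is to reduce the statement to the peak-to-bridge bijection of Theorem~\ref{theo:Dyckpeak}, using the label $\mu$ to decide where to split the staircase of the marked peak. Starting from a Dyck path $D$ with marked peak at height $h$ and label $\mu\in\{1,\dots,h\}$, I would first apply the last-/first-passage decomposition of Theorem~\ref{theo:Dyckpeak} at the marked peak, writing $D=LR$ with $L$ and $R$ as in~\eqref{eq:DyckPeakDecompLR}, so that all $L_i,R_i$ ($i=1,\dots,h$) are Dyck paths. The guiding idea is that the label $\mu$ prescribes a cut of this staircase at level $\mu$: the bottom $\mu$ levels will carry the left-to-right maximum, while the top $h-\mu$ levels will be converted into crossings.

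Second, I would assemble the part of the bridge carrying the maximum. Using the fixed bijection $\varphi$ between Dyck paths and negative Dyck paths, set
\begin{align*}
 P = \varphi(L_1)\,\uu\,\varphi(L_2)\,\uu \cdots \uu\,\varphi(L_\mu)\,\uu\,\dd\,R_\mu\,\dd\,R_{\mu-1}\cdots \dd\,R_1 .
\end{align*}
The flips are essential: since each $\varphi(L_i)$ is a negative excursion based at height $i-1\le\mu-1$, every peak to the left of the terminal peak $\uu\dd$ of $P$ stays strictly below $\mu$, so this peak at height $\mu$ is a genuine strict left-to-right maximum, which I mark. Moreover the descent $\dd R_\mu\cdots\dd R_1$ stays weakly above the $x$-axis (its bases are $\mu-1,\mu-2,\dots,0$), so $P$ has no crossing after the marked maximum and ends on the axis approached from above.

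Third, I would turn the top of the staircase into crossings. The excursion of $D$ strictly above level $\mu$, shifted down by $\mu$, is a Dyck path
\begin{align*}
 E = L_{\mu+1}\,\uu\cdots\uu\, L_h\,\uu\,\dd\,R_h\,\dd\cdots \dd\,R_{\mu+1}
\end{align*}
with marked peak at height $h-\mu$; applying Theorem~\ref{theo:Dyckpeak} to $E$ gives a bridge $\widetilde E$ that starts with $\dd$ and has exactly $h-\mu-1$ crossings. I then output $W=P\,\widetilde E$. As $P$ reaches the axis from above while $\widetilde E$ begins with a step below it, the junction contributes one further crossing, and all crossings of $W$ occur after the marked maximum; hence $W$ is a bridge with marked strict left-to-right maximum at height $\mu$ and exactly $(h-\mu-1)+1=h-\mu$ crossings after it. The degenerate case $\mu=h$ (where $E$ and $\widetilde E$ are empty) yields $0$ crossings, as required, and the length is clearly preserved.

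Finally, for invertibility I would locate the marked maximum, split $W$ at the first step descending below the $x$-axis after it — this is exactly the junction, because $P$ stays nonnegative after the maximum whereas $\widetilde E$ starts below — recover $P$ and $\widetilde E$, undo Theorem~\ref{theo:Dyckpeak} on $\widetilde E$ (reading $h-\mu$ as its number of crossings plus one), and undo the first-passage decomposition of $P$ on the ascent to and descent from the maximum to recover the $L_i$ (via $\varphi^{-1}$) and $R_i$; reassembly returns $D$ together with $\mu$. The main obstacle is precisely the interface between the two halves: one must verify \emph{simultaneously} that the flips make the marked peak a strict left-to-right maximum, that no spurious crossing is created inside $P$ after the maximum, and that the junction supplies exactly the one missing crossing, so that the count lands on $h-\mu$ and the splitting point is canonically recoverable.
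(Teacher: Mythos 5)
Your construction is correct and coincides with the paper's own proof: the paper builds exactly your bridge $P\,\widetilde{E}$ in \eqref{eq:heightlabeled2LRbridge}, applying $\varphi$ to $L_1,\dots,L_\mu$ so that the marked peak becomes a strict left-to-right maximum at height $\mu$ and appending the alternating concatenation $\varphi(D_{\mu+1})D_{\mu+2}\varphi(D_{\mu+3})\cdots$, which is precisely what Theorem~\ref{theo:Dyckpeak} applied to your shifted excursion $E$ produces. The differences are purely presentational: you invoke Theorem~\ref{theo:Dyckpeak} as a black box for the tail and make the crossing count at the junction explicit, whereas the paper writes out the concatenation directly.
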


\begin{proof}
    This bijection follows directly from the one described in the proof of Theorem~\ref{theo:Dyckpeak}, whose notation we will use here.
    The difference is that here we concatenate the (positive and negative) Dyck paths differently; see Figure~\ref{fig:bijection-heightlabeled-LRmax}.

    Let a Dyck path with height-labeled peak be given.
    Let $h$ be the height of this peak and $\mu \in \{1,\dots,h\}$ be its label.
    First, we apply the bijection $\varphi$ onto all parts $\LL_i$ in \eqref{eq:DyckPeakDecompLR}.
    From that we get the following bridge in which the height-labeled peak is now a left-to-right maximum (underlined):
    \begin{align*}
        \varphi(\LL_1) \uu \varphi(\LL_2) \uu \dots \uu \varphi(\LL_{h})
        \, \underline{\uu \dd} \,
        \RR_{h} \dd \RR_{h-1} \dd \dots \dd \RR_{1}.
    \end{align*}
    Next, we transform this bridge, such that in the end the height-label $\mu$ constitutes the height of the left-to-right maximum.
    For this purpose, we create and concatenate the paths $\DD_i$ and $\varphi(\DD_i)$ in an alternating fashion at the end:
    \begin{align}
        \label{eq:heightlabeled2LRbridge}
        \varphi(\LL_1) \uu \varphi(\LL_2) \uu \dots \uu \varphi(\LL_{\mu})
        \, \underline{\uu \dd} \,
        \RR_{\mu} \dd \RR_{\mu-1} \dd \dots \dd \RR_{1} 
        \, \varphi(\DD_{\mu+1}) \DD_{\mu+2} \varphi(\DD_{\mu+3}) \dots \DD_h,
    \end{align}
    when $h-\mu$ is even. Otherwise, the last $\DD_h$ is replaced by $\varphi(\DD_h)$.

    For the reverse direction, let a Dyck bridge with marked left-to-right maximum be given. 
    It is then straightforward to decompose it into~\eqref{eq:heightlabeled2LRbridge} and to reverse the steps above to build a Dyck path.
    The left-to-right maximum becomes the height-labeled peak, labeled by its current height. 
    Observe that the height-labeled peak is lifted by the number of crossings of the $x$-axis to the left of this peak.
\end{proof}
\begin{figure}[t]
   \centering
\begin{tikzpicture}[scale=0.28]
    
    \newcommand{\pathLength}{49}
    \draw[step=1cm,lightgray,very thin] (0,0) grid (\pathLength,8);
    \draw[thick,->] (0,0) -- (\pathLength+0.5,0) node[anchor=north west] {x};
    \draw[thick,->] (0,0) -- (0,8+0.5) node[anchor=south east] {y};

    \draw[line width=2pt,blue] (0,0) -- ++(1,1); 
    \draw[line width=2pt,blue] (1,1) -- ++(1,1) -- ++(1,-1) -- ++(1,1); 
    \draw[line width=2pt,blue] (4,2) -- ++(1,1) -- ++(1,-1); 
    \draw[line width=2pt,blue] (6,2) -- ++(1,-1); 
    \draw[line width=2pt,blue] (7,1) -- ++(1,-1) -- ++(1,1); 
    \draw[line width=2pt,black] (8,0) -- ++(1,1) -- ++(1,1); 
    \draw[line width=2pt,blue] (10,2) -- ++(1,1); 
    \draw[line width=2pt,blue] (11,3) -- ++(1,1) -- ++(1,-1) -- ++(1,1); 
    \draw[line width=2pt,blue] (14,4) -- ++(1,1); 
    \draw[line width=2pt,blue] (15,5) -- ++(1,1); 
    \draw[line width=2pt,blue] (16,6) -- ++(1,1); 
    \draw[line width=2pt,blue] (17,7) -- ++(1,1) -- ++(1,-1); 
    \draw[line width=2pt,blue] (19,7) -- ++(1,-1); 
    \draw[line width=2pt,blue] (20,6) -- ++(1,-1); 
    \draw[line width=2pt,blue] (21,5) -- ++(1,-1); 
    \draw[line width=2pt,blue] (22,4) -- ++(1,-1); 
    \draw[line width=2pt,blue] (23,3) -- ++(1,-1) -- ++(1,1);
    \draw[line width=2pt,black] (24,2) -- ++(1,1) -- ++(1,1); 
    \draw[line width=2pt,black] (26,4) -- ++(1,1); 
    \draw[line width=2pt,blue] (27,5) -- ++(1,1) -- ++(1,-1) -- ++(1,1); 
    \draw[line width=2pt,black] (29,5) -- ++(1,1) -- ++(1,-1) -- ++(1,-1); 
    \draw[line width=2pt,blue] (32,4) -- ++(1,-1) -- ++(1,1)-- ++(1,1); 
    \draw[line width=2pt,black] (32,4) -- ++(1,-1); 
    \draw[line width=2pt,blue] (35,5) -- ++(1,1); 
    \draw[line width=2pt,blue] (36,6) -- ++(1,1); 
    \draw[line width=2pt,blue] (37,7) -- ++(1,1) -- ++(1,-1); 
    \draw[line width=2pt,blue] (39,7) -- ++(1,-1);
     \draw[line width=2pt,blue] (40,6) -- ++(1,-1); 
    \draw[line width=2pt,blue] (41,5) -- ++(1,-1); 
    \draw[line width=2pt,blue] (42,4) -- ++(1,-1) -- ++(1,-1) -- ++(1,1) -- ++(1,-1); 
    \draw[line width=2pt,black] (43,3) -- ++(1,-1); 
     \draw[line width=2pt,black] (46,2) -- ++(1,-1); 
     \draw[line width=2pt,black] (47,1) -- ++(1,-1); 
    
    \filldraw[red] (30,4) circle (8pt);
    \draw[red, dashed, line width=1pt] (30,0) -- (30,6);

    \node at (8,6) {L};
    \node at (42,6) {R};
\end{tikzpicture}
    
\begin{tikzpicture}[scale=0.28]

    \newcommand{\pathLength}{49}
    \draw[step=1cm,lightgray,very thin] (0,-4) grid (\pathLength,8);
    \draw[thick,->] (0,0) -- (\pathLength+0.5,0) node[anchor=north west] {x};
    \draw[thick,->] (0,-4) -- (0,8+0.5) node[anchor=south east] {y};
 
    \coordinate (current) at (0,0);

    \draw[line width=2pt,blue] (0,0) -- ++(1,-1) 
    -- ++(1,-1) 
    -- ++(1,1) 
    -- ++(1,-1) 
    -- ++(1,-1) 
    -- ++(1,1) 
    -- ++(1,1) 
    -- ++(1,1) 
    -- ++(1,1) 
    -- ++(1,1) 
    -- ++(1,-1) 
    -- ++(1,-1) 
    -- ++(1,1) 
    -- ++(1,-1) 
    -- ++(1,-1) 
    -- ++(1,-1) 
    -- ++(1,-1) 
    -- ++(1,-1) 
    -- ++(1,1) 
    -- ++(1,1) 
    -- ++(1,1) 
    -- ++(1,1) 
    -- ++(1,1) 
    -- ++(1,1) 
    -- ++(1,1) 
    -- ++(1,1) 
    -- ++(1,-1) 
    -- ++(1,1) 
    -- ++(1,1) 
    -- ++(1,1) 
    -- ++(1,1) 
    -- ++(1,1) 
    -- ++(1,-1) 
    -- ++(1,-1) 
    -- ++(1,-1) 
    -- ++(1,-1) 
    -- ++(1,-1) 
    -- ++(1,-1) 
    -- ++(1, 1) 
    -- ++(1,-1) 
    -- ++(1,-1) 
    -- ++(1,-1) 
    -- ++(1,-1) 
    -- ++(1,-1) 
    -- ++(1,1) 
    -- ++(1,1) 
    -- ++(1, 1) 
    -- ++(1,-1); 
    
    \draw[line width=2pt,black] (8,0) -- ++(1,1) -- ++(1,1); 
    \draw[line width=2pt,black] (24,2) -- ++(1,1) -- ++(1,1) -- ++(1,-1);
    \draw[line width=2pt,black] (37,3) -- ++(1,-1); 
    \draw[line width=2pt,black] (40,2) -- ++(1,-1) -- ++(1,-1);  

     \filldraw[red] (26,4) circle (8pt);
     \filldraw[red] ([xshift=-8pt,yshift=-8pt]42,0) rectangle ++(16pt,16pt);
     \filldraw[red] ([xshift=-8pt,yshift=-8pt]46,0) rectangle ++(16pt,16pt);
     
\end{tikzpicture}
    
    \caption{A Dyck path with a height-labeled peak (red dot) with label $4$ at height $6$ and its image under the bijection from Proposition~\ref{prop:LabeltoMaxima} given by a Dyck bridge with marked strict left-to-right maximum (red dot) at height $4$ and $2=6-4$ crossings (red squares). 
    } 
    \label{fig:bijection-heightlabeled-LRmax}
    
\end{figure}
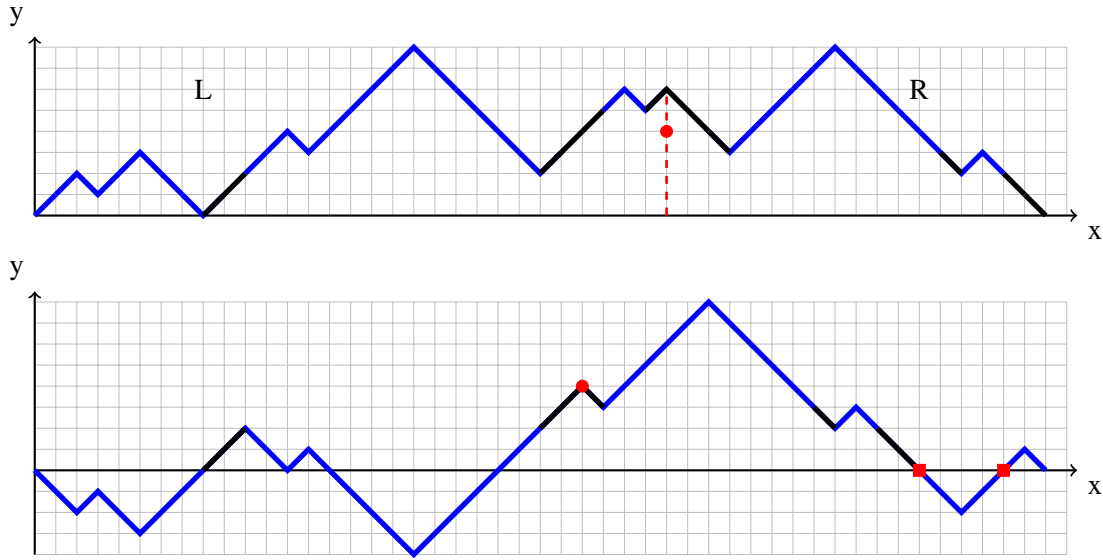

\section{Conclusion and Outlook}

Having established links between the internal structures of Dyck paths and integer compositions, it is only natural to ask whether important theorems from one subject can be transported to the other. When we talk about integer compositions or partitions we are keen to see arithmetic properties in those structures. 
In the long version of this work~\cite{GhoshDastidarWallner2024Bijections}, we give further bijective links and we show that such arithmetic results also exist in lattice paths. 
In particular, we are pleased to note that: 

\begin{theorem}[{\cite[Theorem~3.8]{GhoshDastidarWallner2024Bijections}}]
Let $D_r(n)$ be the number of Dyck paths with semi-length $n$ and with \emph{exactly} $r$ peaks for every reached height. 
Then $D_r(n) \equiv 0 \pmod{r+1}$ for $n>r$.
\end{theorem}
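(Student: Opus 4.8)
The plan is to prove the congruence by constructing a \emph{free} action of the cyclic group $\mathbb{Z}/(r+1)\mathbb{Z}$ on the set $\mathcal{D}_r(n)$ of the paths counted by $D_r(n)$; once every orbit is shown to have size exactly $r+1$ for $n>r$, the divisibility $D_r(n)\equiv 0\pmod{r+1}$ is immediate. First I would unpack the hypothesis: since a Dyck path of maximal height $H$ visits every height $1,\dots,H$, the requirement of exactly $r$ peaks at each reached height says precisely that the multiset of peak heights equals $\{1,2,\dots,H\}$ with each value occurring $r$ times. The crucial preliminary observation is that a peak $\uu\dd$ of height $1$ is nothing but a \emph{trivial arch}: writing the path as its unique concatenation of arches (maximal factors touching the $x$-axis only at their endpoints), a height-$1$ peak can only occur as a standalone factor $\uu\dd$. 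Hence there are exactly $r$ trivial arches, while every non-trivial arch has all of its peaks at height $\ge 2$, so the non-trivial arches jointly account for the sub-multiset $\{2,\dots,H\}^r$.

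Next I would define the action. The $r$ trivial arches split the arch sequence into $r+1$ ordered \emph{gaps}, yielding the factorization
\[
  G_0\,\uu\dd\,G_1\,\uu\dd\,G_2\cdots \uu\dd\,G_r,
\]
where each $G_i$ is a (possibly empty) block of consecutive non-trivial arches. Let $\rho$ act by the cyclic shift $(G_0,\dots,G_r)\mapsto(G_r,G_0,\dots,G_{r-1})$. Since $\rho$ merely permutes a fixed collection of arches, its image is again a genuine Dyck path with the same length, the same multiset of peak heights (hence the same $H$ and the same per-height peak counts), and the same number $r$ of trivial arches; thus $\rho$ is a well-defined bijection of $\mathcal{D}_r(n)$ with $\rho^{r+1}=\mathrm{id}$.

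The heart of the argument, and the step I expect to be the main obstacle, is proving that this action is free as soon as $n>r$. Note that $n>r$ forces $H\ge 2$ (the case $H=1$ consists only of the $r$ trivial arches and has length exactly $r$), so there is at least one non-trivial arch and the combined peak multiset of the non-trivial arches equals $\{2,\dots,H\}^r$, with multiplicity $r$ at height $2$. If some path had orbit size $p<r+1$, its gap tuple would be periodic with minimal period $p\mid(r+1)$, and the combined peak multiset of all $r+1$ gaps would then be $\tfrac{r+1}{p}$ times the multiset coming from the first $p$ gaps; comparing multiplicities at height $2$ gives $\tfrac{r+1}{p}\mid r$. Since $\tfrac{r+1}{p}$ also divides $r+1$ and $\gcd(r,r+1)=1$, this forces $\tfrac{r+1}{p}=1$, i.e.\ $p=r+1$, a contradiction. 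Hence every orbit has size exactly $r+1$, and $D_r(n)$ is a multiple of $r+1$.

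Finally I would remark that this pinpoints the threshold: the unique configuration fixed by $\rho$ is the all-empty gap tuple, namely the path $(\uu\dd)^r$, which is exactly the excluded case $n=r$. The only genuinely delicate point is the coprimality step ruling out short orbits; the remaining verifications, that the shift respects length, the peak-height multiset, and the number of trivial arches, are routine.
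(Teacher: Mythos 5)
Your proof is correct, but note that this paper does not actually contain a proof of the statement to compare against: the theorem is quoted from the long version \cite{GhoshDastidarWallner2024Bijections}, so your argument can only be judged on its own merits, and on those it holds up. The three pillars are all sound: (i) a peak at height $1$ can only be a trivial arch $\uu\dd$, since inside a non-trivial arch the path stays at height $\geq 1$ and all its peaks lie at height $\geq 2$, so a path in $\mathcal{D}_r(n)$ has exactly $r$ trivial arches; (ii) the cyclic shift $\rho$ of the $r+1$ gaps merely permutes arches, hence preserves length, the peak-height multiset, and therefore membership in $\mathcal{D}_r(n)$, and the gap tuple of the image is the shifted tuple, so $\rho$ generates a $\mathbb{Z}/(r+1)\mathbb{Z}$-action; (iii) the freeness step is where the hypothesis $n>r$ enters, exactly as you use it: $n>r$ forces maximal height $H\geq 2$, so height $2$ is reached and carries exactly $r$ peaks, all inside non-trivial arches, and a gap tuple fixed by $\rho^p$ with $p<r+1$ would force every multiplicity of the gaps' combined peak multiset, in particular $r$, to be divisible by $(r+1)/p>1$, contradicting $\gcd(r,r+1)=1$. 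Orbit counting then gives $(r+1)\mid D_r(n)$, the degenerate case $r=0$ is vacuous modulo $1$, and your closing remark correctly identifies $(\uu\dd)^r$ (i.e., $n=r$) as the obstruction that makes the threshold sharp. This cyclic-shift (cycle-lemma style) argument is the natural route to a congruence of this shape, and it is a complete, self-contained proof.
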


In the opposite direction we also want to see if important theorems in integer compositions and partitions can ``generate'' theorems in the world of lattice paths. In 2020, Kim, Kim, and Lovejoy~\cite{zbMATH07247777} observed the phenomenon of \emph{parity bias} in partitions, where they showed that: 
if $p_o(n)$ denotes the number of partitions of $n$ with more odd parts than even parts and if $p_e(n)$ denotes the number of partitions of $n$ with more even parts than odd parts, then $p_o(n) > p_e(n)$. 

In a subsequent article we will show that the analogous theorem is true even for integer compositions.
Furthermore, we will demonstrate that even for Dyck paths a similar result holds, when segregating paths with respect to whether they have more peaks at odd or even heights.


\bibliographystyle{eptcs}
\bibliography{bibliography}   

\end{document}